\newcommand\boxb[1]{\square_b}
\newcommand\paperbody%
\newtheorem{lemma}{Lemma}
\newtheorem{proposition}{Proposition}
\newtheorem{theorem}{Theorem}
\newtheorem{non-theorem}{Non-Theorem}
\theoremstyle{remark}
\newcommand\cFTs{{}^{\Phi}\overline{T}\kern-1pt{}^*}
\newcommand\ie{i.e.\ }
\newcommand\bbR{\mathbb R}
\newcommand\CI{{\mathcal{C}}^{\infty}}
\newcommand\CmI{{\mathcal{C}}^{-\infty}}
\newcommand\cFNs{{}^{\Phi}\overline N\kern-1pt{}^*}
\newcommand\dCI{\dot{\mathcal{C}}^{\infty}}
\newcommand\pa{\partial}
\newcommand\supp{\operatorname{supp}}
\newcommand\Mif{\text{ if }}
\newcommand\Mwhere{\text{ where }}
\newcommand\Mwith{\text{ with }}
\begin{document}
\title[Distributions and the de Rham theorem]
{A remark on distributions and the de Rham theorem}

\author{Richard B. Melrose}
\address{Department of Mathematics, Massachusetts Institute of Technology}
\email{rbm@math.mit.edu}
\dedicatory{\today}

\begin{abstract} We show that the de Rham theorem, interpreted as the
isomorphism between distributional de Rham cohomology and simplicial
homology in the dual dimension for a simplicial decomposition of a compact
oriented manifold, is a straightforward consequence of elementary properties
of currents. The explicit construction of this isomorphism extends to other
cases, such as relative and absolute cohomology spaces of manifolds with
corners.
\end{abstract}
\maketitle


The de Rham theorem (\cite{MR49:11552,MR85m:58005}) is generally
interpreted as the isomorphism, for a compact oriented manifold $X,$
between the cohomology of the de Rham complex of smooth forms 
\begin{equation}
0\longrightarrow \CI(X)\longrightarrow \CI(X;\Lambda ^1)\longrightarrow
\cdots\longrightarrow \CI(X;\Lambda ^n)\longrightarrow 0,
\label{dR.6}\end{equation}
where $\dim X=n,$ and the simplicial, or more usually the \v Cech,
cohomology of $X.$ This isomorphism is constructed using a double complex;
for proofs of various stripes see \cite{MR54:1273}, \cite{MR39:6343} or
\cite{MR84k:58001}.

The distributional de Rham cohomology, the cohomology of the complex
\eqref{dR.6} with distributional coefficients (currents in the terminology
of de Rham),
\begin{equation}
0\longrightarrow \CmI(X)\longrightarrow\CmI(X;\Lambda
^1)\longrightarrow\cdots\longrightarrow \CmI(X;\Lambda ^n)\longrightarrow0,
\label{dR.9}\end{equation}
is naturally Poincar\'e dual to the smooth de Rham cohomology using the
integration map $(\alpha ,\beta )\longmapsto\int_X \alpha \wedge \beta.$

Here we show that there is a relatively simple retraction argument which
shows that the homology of \eqref{dR.9} is isomorphic to the simplicial
homology, for any simplicial decomposition, in the dual dimension. The map
from simplicial to distributional de Rham cohomology takes a simplex to its
Poincar\'e dual (see for example \cite{Bott-Tu1}). There are many possible
variants of the proof below and in particular it is likely to apply to
intersection type homology theories on compact manifolds with corners.

I would like to thank Yi Lin for pointing out an error in the proof of
Lemma~\ref{11.11.1999.1} in an earlier version.

\section{Distributions and currents}

We use some results from distribution theory which are well known. These
are mainly to the effect that a simplex is `regular' as a support of
distributions.

\begin{lemma}\label{dR.4} Any extendible distribution on the interior of
an $n$-simplex $S\subset\bbR^n,$ \ie an element of the dual of
$\dCI(S;\Omega)=\{u\in\CI(\bbR^n);\supp(u)\subset S\},$ is the restriction
of a distribution on $\bbR^n$ with support in $S.$
\end{lemma}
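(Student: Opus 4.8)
The plan is to show that a continuous linear functional on $\dCI(S;\Omega)$ extends by duality to all of $\CI(\bbR^n;\Omega)$. Concretely, let $u \in (\dCI(S;\Omega))'$; I want to produce $\tilde u \in \CmI(\bbR^n)$ with $\supp(\tilde u)\subset S$ restricting to $u$. The natural route is the Hahn–Banach theorem: $\dCI(S;\Omega)$ is a closed subspace of the Fréchet space $\CI(\bbR^n;\Omega)$ (it is cut out by the vanishing, together with all derivatives, along the complement of $S$, or rather it is the intersection of kernels of evaluation-type seminorms — in any case it is a closed linear subspace, being defined by the closed support condition). Since $S$ is compact, continuity of $u$ on $\dCI(S;\Omega)$ means $|u(\phi)| \le C \sum_{|\alpha|\le N}\sup_{S}|\pa^\alpha \phi|$ for $\phi$ supported in $S$, and this same seminorm is a continuous seminorm on $\CI(\bbR^n;\Omega)$. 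Hahn–Banach then yields a continuous extension $\tilde u$ to $\CI(\bbR^n;\Omega)$, hence an element of $\CmI(\bbR^n)$ (a compactly supported distribution, since it is bounded by a seminorm involving only values on the compact set $S$).

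The remaining point is to check that $\supp(\tilde u)\subset S$. This is where a little care is needed and is the main (though mild) obstacle: Hahn–Banach gives \emph{an} extension, but not obviously one with the right support. The fix is to observe that any $\psi \in \CI(\bbR^n;\Omega)$ with $\supp(\psi)\cap S = \emptyset$ lies in $\dCI(\bbR^n \setminus S;\Omega)$, and one wants $\tilde u(\psi)=0$. It suffices to arrange the extension to annihilate the closed subspace $Z = \{\psi \in \CI(\bbR^n;\Omega) : \supp(\psi)\cap S=\emptyset\}$. But on $Z$ the seminorm $\sum_{|\alpha|\le N}\sup_S|\pa^\alpha\psi|$ vanishes identically, so $u$, regarded as the zero functional on $Z$, is dominated by the same seminorm; extending the functional $u$ from the subspace $\dCI(S;\Omega) + Z$ (on which it is unambiguously defined and still dominated by that seminorm, because the seminorm sees only $S$) to all of $\CI(\bbR^n;\Omega)$ via Hahn–Banach produces $\tilde u$ with $\tilde u\restrictedto Z = 0$, i.e.\ $\supp(\tilde u)\subset S$, and $\tilde u\restrictedto{\dCI(S;\Omega)} = u$.

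One should double-check that $u$ is well defined on $\dCI(S;\Omega)+Z$, i.e.\ that the two definitions agree on the intersection: if $\phi = \psi$ with $\phi$ supported in $S$ and $\supp(\psi)\cap S=\emptyset$, then $\phi\equiv 0$, so there is no conflict; and the bound $|u(\phi+\psi)| = |u(\phi)| \le C\sum_{|\alpha|\le N}\sup_S|\pa^\alpha\phi| = C\sum_{|\alpha|\le N}\sup_S|\pa^\alpha(\phi+\psi)|$ holds since $\psi$ and its derivatives vanish on $S$. Thus the hypotheses of Hahn–Banach (dominated by a seminorm) are met on the enlarged subspace, and the extension is exactly the desired distribution. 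The only structural input is that a single continuous seminorm on $\CI(\bbR^n;\Omega)$ supported on the compact set $S$ controls $u$ — which is precisely the statement that continuity on the LF-type (here Fréchet, since $S$ is compact) space $\dCI(S;\Omega)$ provides — and this is the elementary fact about simplices being "regular" supports alluded to in the text.
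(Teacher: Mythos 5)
Your argument is correct; note, however, that the paper offers no proof of this lemma at all --- it is stated (together with Lemma~\ref{dR.10}) as a standard fact from distribution theory, so there is nothing in the text to compare against, and the Hahn--Banach extension you give is indeed the standard argument. Two small comments. First, the detour through the enlarged subspace $\dCI(S;\Omega)+Z$ is unnecessary: the seminorm form of Hahn--Banach already produces an extension $\tilde u$ with $|\tilde u(\psi)|\le C\sum_{|\alpha|\le N}\sup_S|\pa^\alpha\psi|$ for \emph{every} $\psi\in\CI(\bbR^n;\Omega)$, and this bound vanishes whenever $\supp(\psi)\cap S=\emptyset$, which is precisely $\supp(\tilde u)\subset S$. (Your variant is also valid, though $Z$ is not in fact closed in $\CI(\bbR^n;\Omega)$; closedness plays no role in Hahn--Banach, so this is harmless.) Second, the one step you assert without justification --- that continuity of $u$ yields a bound by a seminorm taken over $S$ rather than over some a priori larger compact set $K$ --- follows because $\pa^\alpha\phi$ vanishes off the closed set $S$ when $\supp(\phi)\subset S$, so $\sup_K|\pa^\alpha\phi|\le\sup_S|\pa^\alpha\phi|$. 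Finally, it is worth observing that your proof uses nothing about simplices and works verbatim for any compact subset of $\bbR^n$; the geometric regularity of the simplex alluded to in the paper is what is actually needed for the structure and decomposition results (Lemma~\ref{dR.10} and Proposition~\ref{11.11.1999.7}), not for the extension statement itself.
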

\noindent Here $\Omega$ is the density bundle.

\begin{lemma}\label{dR.10} Any current with support in a plane
$\bbR^k_y\times\{0\}_z\subset\bbR^n$ is of the form 
\begin{equation}
\sum\limits_{\alpha,I}\delta^{(\alpha)}(z)dz^Iu_{\alpha ,I}(y)
\label{dR.11}\end{equation}
where the $u_{\alpha ,I}$ are currents on $\bbR^k$ and the $\delta
^{(\alpha )}=\pa^\alpha \delta (z)/\pa z^\alpha$ are derivatives of the
Dirac delta function.
\end{lemma}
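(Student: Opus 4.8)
The plan is to reduce the assertion to the classical structure theorem for a scalar distribution supported on a linear subspace, handling the differential-form part of the current separately from the distributional part. First I would fix the standard coframe $dy^1,\dots,dy^k,dz^1,\dots,dz^{n-k}$ of $\Lambda^1\bbR^n$ and use it to write an arbitrary current $T$ uniquely as a finite sum
\begin{equation}
T=\sum_{J,I}v_{J,I}\,dy^J\wedge dz^I
\end{equation}
with the $v_{J,I}$ distributions on $\bbR^n$ (currents of degree zero, identified with distributional densities via the Lebesgue density). Since this coframe is smooth and pointwise linearly independent, the support of $T$ is the union of the supports of the $v_{J,I}$, so the hypothesis on $T$ forces each $v_{J,I}$ to be supported in $\bbR^k_y\times\{0\}_z$.

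The core step is then the well-known structure theorem (Schwartz, H\"ormander) that a distribution $v$ on $\bbR^n$ supported in $\bbR^k_y\times\{0\}_z$ is, over any set relatively compact in $y$, a finite sum $\sum_\alpha\delta^{(\alpha)}(z)\,w_\alpha(y)$ with the $w_\alpha$ distributions on $\bbR^k$. If one prefers to argue directly, this follows by Taylor expanding a test function $\varphi(y,z)$ in $z$ about $z=0$ to the (locally finite) order of $v$ and observing that the remainder, together with sufficiently many of its $z$-derivatives, vanishes on the support of $v$ and is therefore annihilated by it, while the $z$-Taylor coefficients pair with $v$ to produce the $w_\alpha$. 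Applying this to each $v_{J,I}$ above and gathering the $dy^J$ into coefficient currents on $\bbR^k$ — collecting the coefficient of $\delta^{(\alpha)}(z)$ in $v_{J,I}$ against $dy^J$, with the sign from reordering the $dy$'s past $dz^I$, to form $u_{\alpha,I}(y)$ — reassembles $T$ in the form \eqref{dR.11}.

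The one point calling for care is global versus local finiteness: the order of the $v_{J,I}$, hence the set of multi-indices $\alpha$ occurring, need not be bounded on all of $\bbR^n$, so in general \eqref{dR.11} is only a locally finite sum in $y$, obtained by patching the local expansions with a partition of unity in $y$ and absorbing the cutoffs into the $u_{\alpha,I}$. For the uses made of this lemma here the currents are compactly supported, or are localized to a coordinate patch before it is invoked, hence of finite order with a genuinely finite expansion, and no such patching is needed. The remaining bookkeeping, that the form degrees match — $dz^I$ contributing degree $|I|$ and $u_{\alpha,I}$ the complementary degree among the $dy$'s on a homogeneous component — is immediate from the construction.
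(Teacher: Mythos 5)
The paper states this lemma without proof, listing it among the ``results from distribution theory which are well known''; your argument --- splitting the current into scalar distributional coefficients against the basis forms $dy^J\wedge dz^I$, noting that the support condition passes to each coefficient, and then applying (or reproving via the $z$-Taylor expansion) the Schwartz--H\"ormander structure theorem for distributions supported in a linear subspace --- is precisely the standard proof being invoked, and it is correct. Your caveat that the expansion is in general only locally finite in $y$ unless the current has finite order (as it does in every application made here, where everything is compactly supported or localized) is accurate and worth recording, since the displayed formula \eqref{dR.11} reads as a finite sum.
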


\begin{proposition}\label{11.11.1999.7} If $X$ is a manifold with a
simplicial decomposition then any distribution or current with support on
the $p$-skeleton is the sum of distributions with supports on the individual
$p$-simplexes.
\end{proposition}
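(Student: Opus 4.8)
The plan is to argue by induction on $p$, reducing first to the scalar case. A current is a distributional section of the bundles $\Lambda^k$, so over a partition of unity on $X$ and in local frames the statement for currents follows from the statement for distributions (with Lemma~\ref{dR.10} providing the required local structure in the bundle-valued setting). The base case $p=0$ is immediate: $X^{(0)}$ is a discrete (finite) set of vertices, which can be surrounded by pairwise disjoint open sets, so a distribution supported there is automatically the sum of its pieces at the individual vertices. We may assume $p\le n$, since for $p\ge n$ the $p$-skeleton is all of $X$; and for $p\le n$ the $p$-skeleton $X^{(p)}$ equals the union $\bigcup_j\overline{S_j}$ of the closed $p$-simplices, because in a triangulated manifold every simplex of dimension $<p\le n$ is a face of a $p$-simplex.

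For the inductive step, let $u\in\CmI(X)$ with $\supp u\subseteq X^{(p)}$, and let $S_1,\dots,S_N$ be the open $p$-simplices, with closures $\overline{S_j}$ and boundaries $\partial S_j=\overline{S_j}\setminus S_j\subseteq X^{(p-1)}$. The $S_j$ are pairwise disjoint, relatively closed in $X\setminus X^{(p-1)}$, and (by local finiteness of the triangulation) can be separated by disjoint open neighbourhoods in $X$; hence the restriction of $u$ to the open set $X\setminus X^{(p-1)}$ is a finite sum $\sum_j v_j$ with $v_j\in\CmI(S_j)$. The problem thus reduces to extending each $v_j$ to a distribution $\tilde v_j$ on $X$ with $\supp\tilde v_j\subseteq\overline{S_j}$: granting this, $u-\sum_j\tilde v_j$ vanishes on every open $p$-simplex $S_k$ (on $S_k$ it equals $v_k-\tilde v_k=0$, since for $l\ne k$ the closed simplex $\overline{S_l}$ misses $S_k$), hence is supported on $X^{(p-1)}$; by the inductive hypothesis it is a sum of distributions supported on the individual closed $(p-1)$-simplices, each of which lies in some closed $p$-simplex, and absorbing these terms into the $\tilde v_j$ completes the decomposition.

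The extension of $v_j$ is a local question near $\partial S_j$. Fix $x_0\in\partial S_j$ and choose coordinates near $x_0$ in which the affine $p$-plane $P_j$ spanned by $S_j$ is $\{z=0\}$ (writing points as $(y,z)$, $y$ along $P_j$, $z$ transverse). On a conic neighbourhood $\Omega_j$ of $S_j$ about $x_0$ chosen to avoid the other simplices through $x_0$, both $u$ and hence $v_j$ are supported in $P_j$, so Lemma~\ref{dR.10} gives $v_j=\sum_{\alpha,I}\delta^{(\alpha)}(z)\,dz^I\,w_{\alpha,I}(y)$ with the $w_{\alpha,I}$ distributions on an open neighbourhood of $S_j$ inside $P_j\cong\bbR^p$. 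Since $u$ has finite order on a fixed neighbourhood of $x_0$, pairing $v_j$ against test functions supported in $\Omega_j$ and concentrated near $\partial S_j$ yields bounds of polynomial type as one approaches $\partial S_j$; that is, each $w_{\alpha,I}$ is extendible across $\partial S_j$. As $\overline{S_j}$ is a top-dimensional simplex inside $P_j\cong\bbR^p$, Lemma~\ref{dR.4} extends each $w_{\alpha,I}$ to a distribution on $\bbR^p$ with support in $\overline{S_j}$; reassembling by the same formula produces a local extension of $v_j$ supported in $\overline{S_j}$. These local extensions are not canonical (they may differ by terms supported on $\partial S_j\subseteq X^{(p-1)}$), but all of them restrict to $v_j$ on $S_j$, so a partition of unity on a neighbourhood of $\overline{S_j}$ subordinate to the covering by $\Omega_j$-type sets together with a conic neighbourhood of $S_j$ patches them into the global $\tilde v_j$.

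I expect the extension step to be the main obstacle, and it has two parts: verifying that the coefficients $w_{\alpha,I}$ produced by Lemma~\ref{dR.10} are genuinely extendible (the finite-order estimate above, which is precisely where the pathology of non-extendible distributions like $e^{1/t}$ is excluded by $u$ being an honest distribution on $X$), and checking that the non-canonical local extensions along $\partial S_j$ can be glued consistently. This is exactly the point at which the ``regularity'' of simplices as supports—Lemmas~\ref{dR.4} and~\ref{dR.10}—is doing the real work; everything else is bookkeeping with partitions of unity and the induction on skeleta.
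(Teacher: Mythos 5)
The paper never actually proves this proposition: it appears in the section that opens with ``we use some results from distribution theory which are well known,'' and Lemmas~\ref{dR.4} and~\ref{dR.10} are supplied precisely so that the reader can reconstruct it. So there is no official proof to compare against; judged on its own, your reconstruction is sound and uses exactly the tools the paper lays out for this purpose. The skeleton of your argument is right: reduce to extending the restriction $v_j$ of $u$ to the open simplex $S_j$ to a current supported in $\overline{S_j}$; use Lemma~\ref{dR.10} on a neighbourhood of $S_j$ that avoids the other closed simplices to peel off the transversal $\delta^{(\alpha)}(z)\,dz^I$ factors, reducing to a codimension-zero extension problem inside the plane $P_j\cong\bbR^p$; invoke Lemma~\ref{dR.4} there; and patch. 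The bookkeeping (disjoint neighbourhoods of the open $p$-simplices, the observation that two distributions supported in $\overline{S_j}$ which agree on the open simplex agree on any open set meeting $\overline{S_j}$ only in $S_j$, the descending recursion through the skeleta) all checks out, as does the purity fact that for $p\le n$ every lower simplex is a face of a $p$-simplex.

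The one load-bearing step you assert rather than prove is the extendibility of the coefficients $w_{\alpha,I}$ across $\partial S_j$. Be aware of where the polynomial loss actually comes from: the $w_{\alpha,I}$ are only defined on $\Omega_j\cap P_j$, and to read them off from $u$ you must pair against $\psi(y)\,z^{\alpha}\rho(z)$ with the transversal cutoff $\rho$ shrunk to keep its support inside the conic set $\Omega_j$; the derivatives of that shrinking cutoff blow up like a power of $1/d(y,\pa S_j)$, and it is this, combined with the finite order of $u$ on a neighbourhood of the compact set $\overline{S_j}$, that yields the polynomial bounds. You then still need the implication ``polynomial bounds near $\pa S_j$ $\Rightarrow$ element of the dual of $\dCI(\overline{S_j};\Omega)$''; this is where the infinite-order vanishing at $\pa S_j$ of test functions supported in $\overline{S_j}$ absorbs the polynomial blow-up, after which Lemma~\ref{dR.4} applies verbatim. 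You correctly flag this as the crux --- it is exactly the ``regularity of a simplex as a support'' that the paper's section heading advertises --- so I would call this a gap in exposition rather than in the argument.
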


\section{Poincar\'e Lemmas}

\begin{lemma}\label{10.11.1999.6} In $\bbR^n$ the complex of distributional
forms, \ie currents, with support at the origin has homology which is
one-dimensional and is in dimension $n.$
\end{lemma}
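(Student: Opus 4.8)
The plan is to pass to the Fourier transform, where the complex becomes a Koszul complex admitting an explicit bigraded contracting homotopy.

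By Lemma~\ref{dR.10} with $k=0$, so that the plane is $\{0\}\subset\bbR^n$, a current with support at the origin is precisely a finite sum $\sum_{\alpha,I}c_{\alpha,I}\,\delta^{(\alpha)}(x)\,dx^I$ with constants $c_{\alpha,I}\in\bbC$. The Fourier transform in $x$ carries $\pa_{x_j}$ to multiplication by $i\xi_j$ and $\delta^{(\alpha)}$ to $(i\xi)^\alpha$, and leaves the form factors $dx^I$ alone; it therefore identifies the complex of currents supported at the origin with
\begin{equation}
0\longrightarrow\bbC[\xi]\otimes\Lambda^0\longrightarrow\bbC[\xi]\otimes\Lambda^1\longrightarrow\cdots\longrightarrow\bbC[\xi]\otimes\Lambda^n\longrightarrow0,
\label{10.11.1999.6.K}\end{equation}
in which $\Lambda^p$ is spanned by the $dx^I$, $|I|=p$, and the differential is, up to the irrelevant factor $i$, exterior multiplication by the $1$-form $\theta=\sum_j\xi_j\,dx^j$. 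As $\theta\wedge\theta=0$ this is a complex; it is the Koszul complex of $\xi_1,\dots,\xi_n$ over $\bbC[\xi]$.

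On \eqref{10.11.1999.6.K} set $K=\sum_k\pa_{\xi_k}\,\iota_{\pa_{x_k}}$, where $\iota_{\pa_{x_k}}$ deletes $dx^k$; it lowers both the form degree and the polynomial degree by one. From $\iota_{\pa_{x_k}}(\theta\wedge\omega)=\xi_k\omega-\theta\wedge\iota_{\pa_{x_k}}\omega$, $\pa_{\xi_k}\theta=dx^k$, $\sum_k\xi_k\pa_{\xi_k}=\deg_\xi$ and $\sum_k dx^k\wedge\iota_{\pa_{x_k}}=\deg_{\mathrm{form}}$, one computes the Cartan-type identity
\begin{equation}
K\,\delta+\delta\,K=\bigl(n+\deg_\xi-\deg_{\mathrm{form}}\bigr)\,\Id .
\label{10.11.1999.6.C}\end{equation}
Now grade \eqref{10.11.1999.6.K} by polynomial degree as well. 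Since $\theta\wedge$ raises the form and polynomial degrees by one simultaneously, \eqref{10.11.1999.6.K} is the direct sum over $c\in\bbZ$ of the subcomplexes whose term in form degree $p$ is $\bbC[\xi]_{p+c}\otimes\Lambda^p$, and on the $c$-th summand \eqref{10.11.1999.6.C} reads $K\delta+\delta K=(n+c)\Id$. For $c\neq-n$ this summand is acyclic; for $c=-n$, since $\bbC[\xi]_{p-n}=0$ when $p<n$, it collapses to the single space $\bbC[\xi]_0\otimes\Lambda^n=\bbC\cdot dx^1\wedge\cdots\wedge dx^n$ in degree $n$, which equals its own cohomology. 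Hence the complex of currents supported at the origin has cohomology $\bbC$ in degree $n$ and $0$ in all other degrees; transporting back through the Fourier transform, a generator is the class of $\delta(x)\,dx^1\wedge\cdots\wedge dx^n$.

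The only points requiring real care are that the Fourier transform honestly intertwines the de Rham differential with $i\,\theta\wedge$ (routine, given Lemma~\ref{dR.10}), and the sign bookkeeping behind \eqref{10.11.1999.6.C}; the rest is formal. Alternatively one can bypass the Fourier transform and run the same homotopy directly on $\sum_{\alpha,I}c_{\alpha,I}\,\delta^{(\alpha)}(x)\,dx^I$, using $x_j\,\delta^{(\alpha)}=-\alpha_j\,\delta^{(\alpha-e_j)}$ in place of $\pa_{\xi_k}$.
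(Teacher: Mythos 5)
Your proof is correct. It is, at bottom, the Fourier--dual of the argument in the paper: both proofs exploit a grading preserved by $d$ together with a Cartan-type homotopy identity that is an invertible scalar except on a single graded piece. The paper works directly on the currents, decomposing into multi-homogeneous parts under the coordinate-wise scalings $R_i^t$ and using contraction with $x_i\pa_{x_i}$; your homotopy $K=\sum_k\pa_{\xi_k}\iota_{\pa_{x_k}}$ is (up to constants) the Fourier conjugate of contraction with the full radial field, and your grading by $c=\deg_\xi-\deg_{\mathrm{form}}$ is the total homogeneity rather than the multi-homogeneity. Your packaging buys two things: the complex is identified with a standard object (the Koszul complex of $\xi_1,\dots,\xi_n$ over $\bbC[\xi]$), and the non-exactness of $\delta(x)\,dx^1\wedge\cdots\wedge dx^n$ comes for free, since the $c=-n$ summand is one-dimensional and concentrated in form degree $n$ --- whereas the paper needs a separate short homogeneity argument to rule out $\delta(x)\,dx^1\wedge\cdots\wedge dx^n=dv$. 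What the paper's coordinate-by-coordinate version buys is direct reusability: in Proposition~\ref{10.11.1999.12} one scales only in the normal variables $y$ to a $p$-simplex while the tangential variables carry an extendible (non-polynomial) current, so the per-variable homothety argument transfers verbatim, while your version would have to be redone as a partial Fourier transform in $y$ with coefficients in currents on the simplex. I verified the identity \eqref{10.11.1999.6.C}; the signs are right, and the two points you flag as ``requiring care'' are indeed routine.
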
 

\begin{proof} We must show that a closed $k$-current of this type is
always of the form $dv$ for a $k-1$ current supported at the origin, unless
$k=n$ in which case
\begin{equation}
u=c\delta(x)dx_1\wedge\cdots\wedge dx_n+dv.
\label{10.11.1999.8}\end{equation}
The key to this is simply the representation \eqref{dR.11} in this case,
decomposing currents supported at the origin as finite sums
\begin{equation}
u=\sum\limits_{\alpha ,I}c_{\alpha ,I}\delta ^{(\alpha)}(x)dx^I,
\label{10.11.1999.7}\end{equation}
where $\delta ^{(\alpha)}(x)=\pa_x^\alpha \delta (x).$ Each of these terms
is homogeneous of degree $-\alpha_i$ or $1-\alpha_i$ under the homotheity
$R_i^t$ where $R_i^t(x)= (x_1,\cdots,x_{i-1},tx_i,x_{i+1},\cdots,x_n).$
Since $d$ itself is invariant under these transformations it follows that
if $u$ is closed, so are each of the terms of fixed homogeneity in each
variable. Consider the identity for currents
\begin{equation}
t\frac{d}{dt}(R_i^t)^*u=(R_i^t)^*(d\mathcal{L}_i+\mathcal{L}_id)u
\label{dR.1}\end{equation}
where $\mathcal{L}_i$ is contraction with the radial vector field,
$x_i\pa_{x_i}$ in the $i$th coordinate. Then if $u$ is closed and homogeneous
of degree $a_i$ it follows that $\frac{d}{dt}(R _i^t)^*u|_{t=1}=a_iu=dv,$
$v=\mathcal{L}_iu.$ Thus all closed currents of non-zero multi-homogeneity
are exact. The only currents which are homogeneous of degree zero are the
multiples of $\delta(x)dx_1\wedge\cdots\wedge dx_n$ so we have proved
\eqref{10.11.1999.8}.

The lemma now follows from the fact that these forms are not themselves
exact. This again uses the same type of homogeneity argument. If
$\delta(x)dx_1\wedge\cdots\wedge dx_n=dv$ with $v$ supported at the origin,
then $v$ may be replaced by its homogeneous part of degree $0.$ Since there
are no currents of form degree $n-1$ which are homogeneous of degree $0$ it
follows that no such $v$ can exist.
\end{proof}

Next we compute the extendible distributional de Rham cohomology of the
interior of the standard $n$-simplex in $\bbR^n.$ This is also a form of
the Poincar\'e lemma.

\begin{lemma}\label{11.11.1999.1} If $u$ is a closed extendible $k$-current
on the interior of $S_n=\{x\in\bbR^n;0\le x_i\le1,0\le
x_1+\cdots+x_n\le1\}$ then $u=dv$ with $v$ an extendible $(k-1)$-current
unless $k=0$ in which case $u$ is a constant.
\end{lemma}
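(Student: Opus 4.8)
The plan is to prove the Poincaré lemma for extendible currents on the solid simplex $S_n$ by a deformation‐retraction argument, mimicking the classical smooth Poincaré lemma but paying attention to extendibility. The simplex $S_n$ is star‑shaped with respect to an interior point, say the barycentre $p$, so there is a smooth contraction $H_t\colon S_n\to S_n$, $H_t(x)=p+t(x-p)$, with $H_1=\Id$ and $H_0\equiv p$. First I would fix coordinates centred at $p$ and replace $H_t$ by $R^t(x)=tx$; the key point is that $R^t$ maps $S_n$ into itself for $0<t\le 1$ and, crucially, maps the interior into a relatively compact subset of the interior for $t<1$, so pullback under $R^t$ of an extendible current is again extendible, with improved support. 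Using Lemma~\ref{dR.4} to identify extendible currents on $\inn S_n$ with restrictions of currents on $\bbR^n$ supported in $S_n$, one can form $(R^t)^*u$ for such representatives.

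Next I would write down the homotopy formula. For a current $u$ on $\bbR^n$ one has the Cartan‑type identity
\begin{equation}
u-(R^{0})^*u=\int_0^1 \frac{d}{dt}(R^t)^*u\,dt=d\Bigl(\int_0^1 (R^t)^*(\mathcal{L}u)\,\frac{dt}{t}\Bigr)+\int_0^1 (R^t)^*(\mathcal{L}\,du)\,\frac{dt}{t},
\label{pf.homotopy}\end{equation}
where $\mathcal{L}$ is contraction with the radial vector field $\sum x_i\pa_{x_i}$, exactly as in \eqref{dR.1} but with all coordinates scaled simultaneously. If $u$ is closed the second term vanishes, so $u-(R^0)^*u=dKu$ with $Ku=\int_0^1(R^t)^*(\mathcal{L}u)\,dt/t$. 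For $k\ge 1$ the term $(R^0)^*u$ is zero on form degree $\ge 1$ (the map $R^0$ is constant), giving $u=dKu$; for $k=0$ it is the constant value, which for a closed $0$‑current on a connected set is forced to be an actual constant, giving the stated exception. One then restricts $Ku$ back to $\inn S_n$; it is an extendible $(k-1)$‑current because each $(R^t)^*(\mathcal{L}u)$ is, and the $t$‑integral converges in the space of extendible currents.

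The main obstacle is making sense of the operator $K$ and the integral \eqref{pf.homotopy} for currents, i.e.\ distributional forms, rather than smooth forms, and in particular controlling the behaviour near $t=0$ and near the boundary $\pa S_n$ of the extendible representative. Near $t=0$ the factor $dt/t$ is compensated by the fact that $(R^t)^*\mathcal{L}u$ vanishes to appropriate order; this is cleanest if one first decomposes $u$ into parts of fixed homogeneity under the full homotheity $R^t$, as in the proof of Lemma~\ref{10.11.1999.6}, handles each homogeneity component separately (exactness being immediate for nonzero homogeneity and the degree‑zero part being a constant multiple of the top form, which cannot occur for $k<n$ and for $k=n$ still lies in the image of $d$ on $\inn S_n$ since there is no support condition at a point), and then sums. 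Near $\pa S_n$ one uses that $R^t$ for $t<1$ pushes support strictly inside, so after cutting off one is reduced to the interior where the smooth homotopy formula applies verbatim to the regularised current and passes to the limit by the continuity of $d$ and of pullback on currents. I expect the homogeneity‑decomposition route to be the one the author takes, since it reuses the machinery already set up.
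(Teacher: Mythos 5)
There is a genuine gap: the homotopy operator $K u=\int_0^1 (R^t)^*(\mathcal{L}u)\,dt/t$ and the boundary term $(R^0)^*u$ simply do not behave for currents as they do for smooth forms, and the two repairs you propose do not close the hole. A concrete counterexample to your central identity $u=dKu$: take $u=\delta(x-p)\,dx_1\wedge\cdots\wedge dx_n$, a closed extendible $n$-current on the interior of $S_n$ with $p$ the centre of your contraction. Then $\mathcal{L}u=0$ (each $x_i\delta(x)=0$ in the coordinates centred at $p$), so $Ku=0$, while $u\neq 0$; the failure is that $(R^t)^*u=u$ for all $t>0$, so $(R^t)^*u$ does not converge to $(R^0)^*u=0$ as $t\to0$ --- pullback of currents is not continuous down to the degenerate map. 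Your first fix, decomposing $u$ into components of fixed homogeneity under $R^t$, is not available here: the finite decompositions \eqref{dR.11} and \eqref{10.11.1999.7} exist only because the current is supported in a linear subspace (Lemma~\ref{dR.10}); a general extendible current on an open set has at best an infinite, non-convergent (asymptotic) homogeneous expansion, so "handle each component and sum" does not reassemble $u$. Your second fix, regularising and passing to the limit, fails for the same reason the counterexample exists: for terms of non-positive homogeneity at $p$ the $t$-integral against $dt/t$ diverges, and this divergence is not uniform in the regularisation parameter. This is exactly why Lemma~\ref{10.11.1999.6} has nontrivial cohomology in degree $n$ even though the origin is "contractible"; any proof by global radial contraction must fail at least on that class of currents.

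The paper avoids all of this by never contracting radially: it splits off one coordinate at a time, writes $u=u'+dx_1\wedge u''$, produces a primitive of $u''$ in $x_1$ alone by extending $u''$ to a compactly supported distribution on $\bbR^n$ and integrating from $x_1\ll0$ (convolution with a Heaviside function, which always exists --- no convergence issue and no distinguished centre), restricts back to $S_n$, and then inducts over the remaining variables until, for a $k$-form with $k\ge1$, more than $n-k$ of the $\mathcal{L}_i$ annihilate $u-dv_j$ and exactness follows. If you want to salvage your approach you would need to replace the integral $\int_0^1$ by $\int_\epsilon^1$ and prove that $(R^\epsilon)^*u$ is exact modulo terms you can control as $\epsilon\to0$, which in effect forces you back to an argument of the paper's type.
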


\begin{proof} If $u$ is a $0$-current, \ie a distribution, then $du=0$
implies that $u$ is constant. Thus we may assume that $k>0.$ 

We proceed by induction over the condition that there exists a
current $v_j$ such that $\mathcal{L}_i(u-dv_j)=0$ for all $i\le j.$ For the
first step we may write 
\begin{equation}
u=u'+ dx_1\wedge u''
\label{dR.2}\end{equation}
where $\mathcal{L}_1u'=\mathcal{L}_1u''=0$ are respectively a $k$ and a
$k-1$ current. Now, $u''$ may be considered as an element of a finite
tensor product of extendible distributional `functions' on $S_n$ with the
vector space of forms in the variables $x_j,$ $j>1.$ As such it can be
integrated in $x_1.$ That is, there exists an extendible form $v_1$ on
$S_n$ which satisfies $\mathcal{L}_1v_1=0$ and $\frac{\pa v_1}{\pa
  x_1}=u''.$ To construct $v_1,$ simply extend $u''$ to a compactly
supported distribution and then integrate, say from $x_1<<0,$ (which is
always possible) and then restrict this new distribution back to $S_n.$ 
It follows that $u_1=u-dv_1$ satisfies $\mathcal{L}_1(u_1)=0$ since
$$
\mathcal{L}_1(u_1)=u''-\mathcal{L}_1(u_1)(dx_1\wedge\frac{\pa v_1}{\pa x_1}+d'v_1)=0.
$$
Now we may proceed by induction since $du_1=0$ and
$\mathcal{L}_1u_1=0$ implies that $u_1$ is completely independent of $x_1,$
so subsequent steps are the same with fewer variables. When $j>n-k$ it
follows that $u$ is exact.
\end{proof}

Lemma~\ref{10.11.1999.6} is actually the zero dimensional case, and
Lemma~\ref{11.11.1999.1} essentially the $n$-dimensional case of the following
proposition in which we consider the standard $p$-simplex in $\bbR^n:$  
\begin{equation}
S_p=\{x\in\bbR^n;x_1=\cdots=x_{n-p}=0,\ x_j\ge0,\ j>n-p,\
x_{n-p+1}+\cdots+x_n\le1\}.
\label{10.11.1999.10}\end{equation}
The basic current we associate with $S_p$ is 
\begin{equation}
D(S_p)=\chi(S_p)\delta(x_1)\cdots\delta (x_{n-p})dx_1\wedge\cdots dx_{n-p}.
\label{10.11.1999.11}\end{equation}
Here $\chi (S_p)$ is the characteristic function of $S_p$ in the variables
$x_j,$ $j>n-p.$

\begin{proposition}\label{10.11.1999.12} If $u$ is a $k$-current on
$\bbR^n$ with support contained in $S_p$ and $du=0$ in $\bbR^n\setminus\pa
S_p$ then there is a $(k-1)$-current $v$ with support in $S_p$ such that 
\begin{equation}
u=\begin{cases}dv+u'&\Mif k\not= n-p,\\ dv+u'+cD(S_p)&\Mif k=n-p\end{cases}
\Mwith\supp(u')\subset\pa S_p.
\label{10.11.1999.13}\end{equation}
\end{proposition}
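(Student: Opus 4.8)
The plan is to obtain this as a common refinement of Lemmas~\ref{10.11.1999.6} and~\ref{11.11.1999.1}: first strip off the $n-p$ normal variables $z=(x_1,\dots,x_{n-p})$ using the homogeneity/homotopy argument of Lemma~\ref{10.11.1999.6}, leaving a current of the shape $\delta(z_1)\cdots\delta(z_{n-p})\,dz_1\wedge\cdots\wedge dz_{n-p}\wedge w(y)$ in the tangential variables $y=(x_{n-p+1},\dots,x_n)$, and then apply Lemma~\ref{11.11.1999.1} to $w$ on the standard $p$-simplex $\Delta_p=\{y_j\ge0,\ y_{n-p+1}+\cdots+y_n\le1\}\subset\bbR^p_y$. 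Write $\Omega=\bbR^n\setminus\pa S_p$. A preliminary observation that makes the scaling argument legitimate is that $\pa S_p\subset S_p\subset\{z=0\}$, so every point of $\pa S_p$ is fixed by the homotheties $R_i^t(z,y)=(z_1,\dots,tz_i,\dots,z_{n-p},y)$ for $i\le n-p$; hence $\Omega$ is $R_i^t$-invariant and the hypothesis $du=0$ on $\Omega$ is compatible with the homogeneous decomposition.

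First I would put $u$ in normal form with Lemma~\ref{dR.10} (taking $z$ normal and $y$ tangential): $u=\sum_{\alpha,I}\delta^{(\alpha)}(z)\,dz^I\wedge u_{\alpha,I}(y)$, a finite sum with each $u_{\alpha,I}$ a current on $\bbR^p_y$ supported in $\Delta_p$. Decomposing $u=\sum_a u_a$ into its $R_i^t$-multihomogeneous parts (of multidegree $a=(a_1,\dots,a_{n-p})$) is then a finite decomposition, each $u_a$ still supported in $S_p$, and — by the argument in the proof of Lemma~\ref{10.11.1999.6}, applied on the invariant open set $\Omega$ — each $du_a$ vanishes on $\Omega$, so $\supp(du_a)\subset\pa S_p$. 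For any $a$ with some $a_i\ne0$, evaluating the identity~\eqref{dR.1} for $u_a$ (with $d$ the full differential) at $t=1$ gives $a_iu_a=d\mathcal{L}_iu_a+\mathcal{L}_i\,du_a$; since $\mathcal{L}_iu_a$ is supported in $S_p$ and $\mathcal{L}_i\,du_a$ in $\pa S_p$, this shows $u_a=d(a_i^{-1}\mathcal{L}_iu_a)+u'_a$ with $\supp(u'_a)\subset\pa S_p$. The part $u_0$ of multidegree $0$ is forced (homogeneity $0$ in $z_i$ requires $\alpha_i=0$ and $dz_i$ present) to have the form $u_0=\delta(z_1)\cdots\delta(z_{n-p})\,dz_1\wedge\cdots\wedge dz_{n-p}\wedge w(y)$, with $w$ a current of form degree $k-(n-p)$ on $\bbR^p_y$ supported in $\Delta_p$ (and $u_0=0$ if $k<n-p$).

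It then remains to analyze $w$. Since $d(\delta(z_1)\cdots\delta(z_{n-p})\,dz_1\wedge\cdots\wedge dz_{n-p})=0$, one has $du_0=\pm\,\delta(z_1)\cdots\delta(z_{n-p})\,dz_1\wedge\cdots\wedge dz_{n-p}\wedge d_yw$, and $du_0=0$ on $\Omega$ forces $w$ to be closed on the interior of $\Delta_p$. Now Lemma~\ref{11.11.1999.1}, applied in $\bbR^p$, says: if $k\ne n-p$ then the extendible primitive of $w|_{\inside\Delta_p}$ extends (Lemma~\ref{dR.4}, componentwise in $dy^J$) to a current $\bar v$ on $\bbR^p$ supported in $\Delta_p$ with $\supp(w-d_y\bar v)\subset\pa\Delta_p$; if $k=n-p$ then $w=c\,\chi(\Delta_p)+w''$ with $\supp(w'')\subset\pa\Delta_p$. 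Wedging back with $\delta(z_1)\cdots\delta(z_{n-p})\,dz_1\wedge\cdots\wedge dz_{n-p}$, using $\{z=0\}\times\pa\Delta_p=\pa S_p$ and the definition~\eqref{10.11.1999.11} of $D(S_p)$, gives $u_0=dv_0+u'_0$ in the first case and $u_0=cD(S_p)+u'_0$ in the second, with $\supp(u'_0)\subset\pa S_p$ in either case. Summing all the pieces $u_a$ then yields~\eqref{10.11.1999.13}.

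There is no deep step here; the only real care needed is bookkeeping. The two places where the hypothesis that $du$ vanishes only on $\Omega$ (rather than everywhere) enters — the term $\mathcal{L}_i\,du_a$ in the homotopy identity, and the passage from $du_0=0$ on $\Omega$ to $w$ being closed on $\inside\Delta_p$ — are exactly where the allowed error $u'$ with $\supp(u')\subset\pa S_p$ is produced, so it is worth being scrupulous there; one should also double-check the multihomogeneity classification of $u_0$ and that the primitives and extensions involved can genuinely be chosen supported in $S_p$, which is where Lemmas~\ref{dR.4} and~\ref{dR.10} are needed.
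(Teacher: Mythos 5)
Your proposal is correct and follows essentially the same route as the paper: the normal-form decomposition of Lemma~\ref{dR.10}, the multihomogeneous splitting in the normal variables with the homotopy identity~\eqref{dR.1} killing the nonzero-degree parts modulo errors on $\pa S_p$, and then Lemma~\ref{11.11.1999.1} (with the extension Lemma~\ref{dR.4}) applied to the tangential factor of the degree-zero part. If anything you are more scrupulous than the paper about where the hypothesis $du=0$ only off $\pa S_p$ forces the error term $u'$ to appear.
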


\begin{proof} Let us write the first $n-p$ variables as $y$ and the second
$p$ variables as $z.$ The decomposition analogous to \eqref{10.11.1999.7} for a
closed current in this case is 
\begin{equation}
u=\sum\limits_{\alpha ,I}\delta ^{(\alpha)}(y)dy^I\wedge u_{\alpha ,I}(z)
\label{10.11.1999.14}\end{equation}
where now the $u_{\alpha ,I}$ are $(k-|I|)$-currents on $\bbR^{p}$ with
support in $S_p\subset\bbR^p.$

The homogeneity argument of Lemma~\ref{10.11.1999.6} may now be
followed. Thus, $u$ may be decomposed into its multi-homogeneous parts
under separate scaling in each of the variables in $y;$ since $d$ again
preserves such homogeneity, each term is closed if $u$ is. As before, the
terms of non-zero homogeneity, in any of the $y$ variables, is exact near the
interior of $S_p.$ Thus for some $v'$ with support in $S_p$ 
$u'=u-dv$ has the same closedness property and is of the form
\begin{equation}
u'=dy^1\wedge\cdots dy^{n-p} u'' 
\label{dR.3}\end{equation}
where $u''$ is a $k-n+p$ form on $S_p\subset\bbR^p;$ in particular if
$k<n-p$ then $u-dv$ has support in $\pa S_p.$

It follows from \eqref{dR.3} that $u''$ is closed in the interior of $S_p$
as a $k-n+p$ form on $\bbR^p.$ Thus Lemma~\ref{11.11.1999.1} may be
applied. The extendible current constructed there, so that (unless
$k=n-p)$ $u''=dv''$ in the interior of $S_p$ may be extended to a current
$w$ with support in $S_p$ such that $u-dw$ has support in $\pa S_p.$ This
yields the desired result.
\end{proof}

\section{De Rham theorem}

Observe that the current $D(S_p)$ associated with the standard $p$-simplex
is invariant under oriented diffeomorphism of a neighbourhood of it in
$\bbR^n.$ Thus it is well defined for any oriented simplex in an oriented
manifold. In fact only the relative orientation, \ie orientation of the
normal bundle, is important.

\begin{theorem}\label{11.11.1999.2} Let $X$ be an oriented compact manifold,
without boundary, with a given simplicial decomposition, with (oriented)
simplexes labelled $S(j)=S_p(j)$ where $p$ is the dimension, then the chain
map
\begin{equation}
E:\sum\limits_{j}c_jS_p(j)\longmapsto
\sum\limits_{j}c_jD(S_p(j))\in\CmI(X;\Lambda ^{n-p})
\label{11.11.1999.3}\end{equation}
is a homology equivalence giving an isomorphism between the simplicial
$p$-homology of $X$ and its distributional $n-p$ de Rham cohomology.
\end{theorem}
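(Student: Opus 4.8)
The plan is to establish that $E$ is a chain map and then build an inverse up to chain homotopy by a retraction argument, filtering by skeleta. First I would check that $E$ intertwines the simplicial boundary with the exterior differential: for an oriented $p$-simplex $S_p(j)$, Stokes' theorem in the sense of currents gives $dD(S_p(j)) = \pm\sum D(\text{faces})$, where the faces are the $(p-1)$-simplexes in $\pa S_p(j)$ with induced relative orientation. The sign bookkeeping here matches the simplicial boundary map precisely because $D(S_p)$ only depends on the relative orientation (orientation of the normal bundle), as noted just before the statement; so $d\circ E = E\circ\pa$, possibly after fixing a consistent sign convention, and $E$ descends to a map on homology.

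Next I would construct a deformation retraction of the full current complex $\CmI(X;\Lambda^\bullet)$ onto the subcomplex of currents supported on the $n$-skeleton filtration — but the cleanest route is the reverse: show that every class in distributional de Rham cohomology is represented by a current supported on the $0$-skeleton, i.e. a finite combination of $D(S_0(j))$'s, and that the only relations among these are the simplicial ones. Concretely, I would argue by downward induction on a ``support skeleton'': given a closed current $u$, I want to push its support down from $X$ to the $(n-1)$-skeleton, then to the $(n-2)$-skeleton, and so on, at each stage modifying $u$ by an exact current. This is exactly where Proposition~\ref{10.11.1999.12} does the work: locally near the interior of a $p$-simplex, a closed current supported in $S_p$ is, modulo $d$(something supported in $S_p$), either supported in $\pa S_p$ or a multiple of $D(S_p)$ plus something supported in $\pa S_p$. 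Using Proposition~\ref{11.11.1999.7} to split a current supported on the $p$-skeleton into pieces on individual $p$-simplexes, and Lemma~\ref{dR.4}/Lemma~\ref{dR.10} to handle extension and the normal structure, I can apply Proposition~\ref{10.11.1999.12} simplex-by-simplex to replace $u$ by $u - dv + \sum c_j D(S_{n-k}(j))$ with support on the next skeleton down. Iterating, a closed $k$-current becomes cohomologous to $\sum c_j D(S_{n-k}(j))$, so $E$ is surjective on homology.

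For injectivity I would run essentially the same machine on a current of the form $u = E(\sigma) = \sum c_j D(S_{n-k}(j))$ that happens to be exact, $u = dv$ with $v$ a $(k-1)$-current on $X$ — or more precisely, I would show: if $E(\sigma)$ is exact then $\sigma$ is a simplicial boundary. Here one uses that $dE(\sigma) = E(\pa\sigma)$, so a simplicial cycle maps to a closed current; and if that closed current is $dv$, one can first homotope $v$ down to be supported on the $(n-k+1)$-skeleton (again by Proposition~\ref{10.11.1999.12}, noting that pushing $v$'s support down only changes $u = dv$ by $d(d(\cdot)) = 0$ plus terms $d(cD(S))$, which are supported on lower skeleta), write $v$ as a simplicial $(k-1)$-cochain's worth of $D$'s, i.e. $v \equiv E(\tau)$ modulo lower-skeleton currents, and conclude $E(\sigma) = E(\pa\tau) + (\text{stuff supported on the } (n-k-1)\text{-skeleton})$. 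The uniqueness clause ``$c$ is determined'' in Proposition~\ref{10.11.1999.12} — the non-exactness of $D(S_p)$, which is the content of the last paragraph of Lemma~\ref{10.11.1999.6} — then forces $\sigma = \pa\tau$ by comparing coefficients on the $(n-k)$-simplexes.

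The main obstacle I anticipate is the bookkeeping in the inductive push-down step: making precise the claim ``a current supported on the $p$-skeleton and closed away from the $(p-1)$-skeleton can be modified by an exact current to live on the $(p-1)$-skeleton plus a specified combination of $D(S_p(j))$'s,'' uniformly over all simplexes and with the exact current $v$ globally well-defined on $X$ (not just locally near each simplex). The local statement is exactly Proposition~\ref{10.11.1999.12}, but patching the locally-constructed $v''$'s into a single global current supported on the $p$-skeleton, and checking that the leftover is genuinely supported on $\pa S_p$ for each simplex simultaneously, requires a partition-of-unity argument compatible with the simplicial structure, together with the observation that overlaps of (closed) simplexes lie in lower skeleta so the pieces don't interfere. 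A secondary subtlety is fixing signs so that $dD(S_p) = \sum_{\text{faces}} D(\text{face})$ agrees on the nose with the simplicial $\pa$, which I would pin down by computing in the standard simplex \eqref{10.11.1999.10}–\eqref{10.11.1999.11} using Stokes and the definition of induced orientation on a face.
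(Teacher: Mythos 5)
Your proposal follows essentially the same route as the paper: verify $d\circ E=E\circ\delta$ by Stokes on the standard simplex, then push the support of a closed current down skeleton by skeleton using Proposition~\ref{11.11.1999.7} to split over individual simplexes and Proposition~\ref{10.11.1999.12} on each one, with the non-exactness of $D(S_p)$ (from Lemma~\ref{10.11.1999.6}) forcing $\delta c=0$ and giving injectivity exactly as you describe. The patching obstacle you anticipate does not in fact arise, since Proposition~\ref{10.11.1999.12} already produces $v$ with support in the closed simplex $S_p$, so the per-simplex corrections simply sum to a global current with no partition of unity needed.
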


\begin{proof} By direct computation, for the standard $p$-simplex, 
\begin{equation}
dD(S_p)=\sum\limits_{r}D(S_{p-1}(r))
\label{11.11.1999.4}\end{equation}
where $S_{p-1}(r)$ are the bounding $(p-1)$-simplexes with their induced
orientations. Thus the map does give a chain map: 
\begin{equation}
d(E(c))=E(\delta (c))
\label{11.11.1999.5}\end{equation}
where $\delta$ is the standard differential of simplicial homology.

To prove the theorem it suffices to show that the distributional de
Rham complex can be retracted onto the simplicial subcomplex. That is, 
\begin{equation}
\begin{gathered}
u\in\CmI(X;\Lambda ^k),\ du=0\Longrightarrow u=dv+E(c),\ \\
dE(c)=0\Longrightarrow \delta (c)=0.
\end{gathered}
\label{11.11.1999.6}
\end{equation}
In fact the second of these is clear, since $E$ is injective. We also need
the corresponding statements for exact forms. That is 
\begin{equation}
E(c)=dv,\ v\in\CmI(X;\Lambda ^*)\Longrightarrow c=\delta c'.
\label{11.11.1999.14}\end{equation}

Thus suppose $u$ is a closed $k$-current. Let $K_j$ denote the $j$ skeleton
of the simplicial decomposition, \ie the union of the
$j$-simplexes. Proceeding step by step we first decompose $u$ as a sum of
$k$-currents supported on each of the $n$-simplexes. Each of the terms is
closed in the interior of each simplex, so
Proposition~\ref{10.11.1999.12} may be applied to give a decomposition
\begin{equation}
u=\begin{cases}\sum\limits_{j}c_jD(S_n(j))&\Mif k=0\\
dv+u_1&\Mif k>0,
\end{cases}\Mwhere \supp(u_1)\subset K_{n-1}.
\label{11.11.1999.8}\end{equation}
We can ignore the case $k=0.$ Now it follows that $u_1$ is closed. 
Applying Proposition~\ref{11.11.1999.7} to decompose $u_1$ as a sum over
the $n-1$ skeleton and applying Proposition~\ref{10.11.1999.12} to each
part, gives a new decomposition and we may continue until we reach the $n-k$
skeleton. Thus we arrive at 
\begin{equation}
u=\sum\limits_{j}c_jD(S_{n-k}(j))+dv+u_{k+1},\ \supp(u_{k+1})\subset K_{n-k-1}.
\label{11.11.1999.9}\end{equation}
Let $c=\sum\limits_{j}c_jS_{n-k}(j)$ be the corresponding simplicial
chain, so the first term in \eqref{11.11.1999.9} is $E(c).$ Now
$dE(c)=E(\delta c).$ Thus, near the interior of any $n-k-1$ simplex,
$S_{n-k-1}(r),$ $du_{k+1}=-E(\delta c)=c'_rD(S_{n-k-1}(r)).$ However,
$D(S_{n-k-1}(r))$ is not in the range of $d$ on currents supported on the
corresponding simplex. Thus $c'_{r}=0$ for all $r$ which just gives $\delta
c=0.$ Thus 
\begin{equation}
u=E(c)+dv+u_{k+1},\ \supp(u_{k+1})\subset K_{n-k-1},\ \delta c=0,\ du_{k+1}=0.
\label{11.11.1999.10}\end{equation}
Now we can proceed successively, as before, and conclude that $u_{k+1}=dw$
with $w$ supported on the $n-k-1$ skeleton.

The arguments needed for \eqref{11.11.1999.14} are similar. Thus, it
follows from $E(c)=dv$ that $v$ is closed in the complement of the support
of $c.$ Assuming that $c$ is an $n-p$ chain, this means that $dv=0$ off the
$n-p$ skeleton. The argument above shows that $v=E(c')+dv'+w$ where $w$ has
support on the $n-p$ skeleton. Since $dE(c')=E(\delta c')$ we conclude that
$E(c-\delta c')=dw,$ with $w$ supported on the $n-p$ skeleton. As already
noted, this implies that $c=\delta c'.$

This proves the de Rham theorem.
\end{proof}

\providecommand{\bysame}{\leavevmode\hbox to3em{\hrulefill}\thinspace}


\begin{thebibliography}{1}

\bibitem{Bott-Tu1}
R.~Bott and L.~Tu, \emph{Differential {F}orms in {A}lgebraic {T}opology},
  Graduate Texts in Mathematics, vol.~82, Springer, Berlin, Heidelberg,
  New-York, 1982.

\bibitem{MR49:11552}
Georges de~Rham, \emph{Vari\'et\'es diff\'erentiables. {F}ormes, courants,
  formes harmoniques}, Hermann, Paris, 1973, Troisi\`eme \'edition revue et
  augment\'ee, Publications de l'Institut de Math\'ematique de l'Universit\'e
  de Nancago, III, Actualit\'es Scientifiques et Industrielles, No. 1222b.

\bibitem{MR85m:58005}
\bysame, \emph{Differentiable manifolds}, Springer-Verlag, Berlin, 1984, Forms,
  currents, harmonic forms, Translated from the French by F. R. Smith, With an
  introduction by S. S. Chern.

\bibitem{MR39:6343}
S.~T. Hu, \emph{Differentiable manifolds}, Holt, Rinehart and Winston, Inc.,
  New York, 1969.

\bibitem{MR54:1273}
I.~M. Singer and J.~A. Thorpe, \emph{Lecture notes on elementary topology and
  geometry}, Springer-Verlag, New York, 1976, Reprint of the 1967 edition,
  Undergraduate Texts in Mathematics.

\bibitem{MR84k:58001}
Frank~W. Warner, \emph{Foundations of differentiable manifolds and {L}ie
  groups}, Springer-Verlag, New York, 1983, Corrected reprint of the 1971
  edition.

\end{thebibliography}
\end{document}